\newcommand {\SC} {{\mathbb C}}
\newcommand {\SN} {{\mathbb N}}
\newcommand {\SR} {{\mathbb R}}
\newcommand {\SRd} {{\SR^d}}
\let\oldphi=\phi
\renewcommand {\phi} {{\varphi}}
\newcommand {\al} {{\alpha}}
\newcommand {\dt} {{\delta}}
\newcommand {\Dt} {{\Delta}}
\newcommand {\e} {{\varepsilon}}
\newcommand {\ga} {{\gamma}}
\newcommand {\la} {{\lambda}}
\newcommand{\cL}{\mathcal L}
\newcommand{\cO}{\mathcal O}
\newcommand{\cK}{\mathcal K}
\newcommand{\cR}{\mathcal R}
\def\Dom{\mathop{\rm Dom}}
\numberwithin{equation}{section}
\newtheorem{theorem}{Theorem}[section]
\newtheorem{lemma}[theorem]{Lemma}
\newtheorem{corollary}[theorem]{Corollary}
\newtheorem{Remark}[theorem]{Remark}
\newtheorem{example}[theorem]{Example}
\newtheorem{Question}[theorem]{Question}
\newcommand {\Proofof}[1] {\noindent{\bf P{\footnotesize\bf ROOF} of {#1}: } \ }
\newcommand {\ProofEnd} {
             \begin{flushright} \vskip -0.2in $\Box$ \end{flushright}}
\newcommand{\Ba}[1]{\begin{array}{#1}}
\newcommand{\Ea}{\end{array}}
\newcommand{\Be}{\begin{equation}}
\newcommand{\Ee}{\end{equation}}
\newcommand{\Bea}{\begin{eqnarray}}
\newcommand{\Eea}{\end{eqnarray}}
\newcommand{\Beas}{\begin{eqnarray*}}
\newcommand{\Eeas}{\end{eqnarray*}}
\newcommand{\Benu}{\begin{enumerate}}
\newcommand{\Eenu}{\end{enumerate}}
\newcommand{\Bi}{\begin{itemize}}
\newcommand{\Ei}{\end{itemize}}
\newcommand{\BR}{\begin{Remark} \em}
\newcommand{\ER}{\end{Remark}}
\newcommand{\BE}{\begin{example} \em}
\newcommand{\EE}{\end{example}}
\newcommand{\BQ}{\begin{Question} \em}
\newcommand{\EQ}{\end{Question}}
\newcommand {\Ds} {\displaystyle}
\newcommand {\mand} {{\quad\mbox{and}\quad}}
\renewcommand {\mid} {{\,\,\,\colon\,\,\,}}
\newcommand{\sline}{{\smallskip

\noindent}}
\def\Xint#1{\mathchoice
{\XXint\displaystyle\textstyle{#1}}%
{\XXint\textstyle\scriptstyle{#1}}%
{\XXint\scriptstyle\scriptscriptstyle{#1}}%
{\XXint\scriptscriptstyle\scriptscriptstyle{#1}}%
\!\int}
\def\XXint#1#2#3{{\setbox0=\hbox{$#1{#2#3}{\int}$ }
\vcenter{\hbox{$#2#3$ }}\kern-.6\wd0}}
\def\mint{\Xint-}
\def\bbone{{\mathbbm 1}}
\newcommand{\cM}{\mathcal M}
\newcommand {\bB} {{\overline{B}}}
\newcommand {\vrho} {{\varrho}}
\newcommand {\SP} {{\mathbb P}}
\newcommand{\fS}{\mathfrak S}
\begin{document}

\title[Lebesgue points and non-tangential convergence]{Lebesgue points of measures and non-tangential convergence of Poisson-Hermite integrals}

\author[Flores, Garrig\'os,  Viviani]{G. Flores, G. Garrig\'os,    B. Viviani}

\address{G. Flores, CIEM-CONICET \& FaMAF, Universidad Nacional de C\'ordoba, Av. Medina Allende s/n, 
Ciudad Universitaria, CP:X5000HUA C\'ordoba, Argentina}
\email{guilleflores@unc.edu.ar}

\address{G. Garrig\'os, Departamento de Matem\'aticas, Universidad de Murcia, 30100, Espinardo, Murcia, Spain}
\email{gustavo.garrigos@um.es}

\address{Beatriz Viviani\\
IMAL (UNL-CONICET) y FIQ (Universidad Nacional del Litoral), Colectora Ruta Nac. N 168, Paraje El Pozo - 3000 Santa Fe - Argentina} \email{viviani@santafe-conicet.gov.ar}

\thanks{G.G. was supported in part by grants PID2022-142202NB-I00 
from Agencia Estatal de Investigaci\'on (Spain), and grant 21955-PI22 from Fundaci\'on S\'eneca (Regi\'on de Murcia, Spain).
 }
 
 \thanks{F.G. was partially supported by grants from CONICET and SECYT-UNC.
 }

\date{\today}
\subjclass[2010]{42C10, 35C15, 33C45, 40A10, 31B25, 28A15.}

\keywords{Hermite operator, Poisson integral, Fatou theorem, Lebesgue point. }

\begin{abstract}

We study differentiability conditions on a complex measure $\nu$ at a point $x_0\in\SR^d$, 
in relation with the boundary convergence at that point of the Poisson-type integral $\SP_t\nu=e^{-t\sqrt L}\nu$, where $L=-\Dt+|x|^2$ is the Hermite operator. 
In particular, we show that $x_0$ is a Lebesgue point for $\nu$ iff a slightly stronger notion than non-tangential convergence holds for $\SP_t\nu$ at $x_0$.
We also show non-tangential convergence when $x_0$ is a $\sigma$-point of $\nu$, a weaker notion than Lebesgue point, which for $d=1$
coincides with the classical Fatou condition.
\end{abstract}

\maketitle

\section{Introduction}


Let $\nu$ be a locally finite complex measure in $\SR^d$, meaning that $\nu=\nu_1+i\nu_2$ and $\nu_1$, $\nu_2$ are signed 
Borel measures which are finite in compact sets.

We shall use the notion of Lebesgue point for $\nu$ as defined by Saeki in \cite{Sae96}.
Namely, we say that $x_0\in\SR^d$ is a Lebesgue point of $\nu$, denoted $x_0\in\cL_\nu$, 
if there exists $\ell\in\SC$ such that
\Be
\label{v1}
\lim_{r\to0}\frac{|\nu-\ell\,dy|\big(B_r(x_0)\big)}{|B_r(x_0)|}=0.
\Ee
Here $dy(E)=|E|$ denotes the standard Lebesgue measure of a  set $E\subset\SR^d$ and $B_r(x_0)=\{x\in\SR^d\mid |x-x_0|<r\}$.
If we write $\nu$ in terms of the Lebesgue-Radon-Nikodym decomposition, that is
\Be
\label{RN}
\nu=f\,dy+\la,\quad \mbox{with $\la\perp dy$}
\Ee
(see e.g. \cite[Thm 3.12]{Fol}), then using the property that $|\mu+\la|=|\mu|+|\la|$ when $\mu\perp\la$, 
we see that $x_0\in \cL_\nu$ if and only if for some $\ell\in\SC$ it holds 
\Be
\lim_{r\to0}\mint_{B_r(x_0)}|f(y)-\ell|\,dy=0\mand \lim_{r\to0}\frac{|\la|\big(B_r(x_0)\big)}{|B_r(x_0)|}=0. 
\label{v2}
\Ee 
In particular, if $\nu=fdx$, this is the usual notion of Lebesgue point for the function $f$, while
for a general $\nu$ as in \eqref{RN} we have
\[
\cL_\nu=\cL_f\cap \cL_\la.
\]
Observe also that, if $x_0\in\cL_\nu$,  we can express the value of $\ell$ in \eqref{v1} without appealing to the 
Lebesgue-Radon-Nikodym decomposition, 
by letting
\Be
\ell=D\nu(x_0):=\lim_{r\to0}\frac{\nu\big(B_r(x_0)\big)}{|B_r(x_0)|}.
\label{Dv}
\Ee
Indeed, this identity follows from the elementary bound
\Beas
\left|\frac{\nu\big(B_r(x_0)\big)}{|B_r(x_0)|}-\ell\right| & = & \left|\frac{(\nu-\ell\,dy)\big(B_r(x_0)\big)}{|B_r(x_0)|}\right|
\leq \frac{|\nu-\ell\,dy|\big(B_r(x_0)\big)}{|B_r(x_0)|}\longrightarrow 0.
\Eeas
The quantity $D\nu(x_0)$ in \eqref{Dv} is sometimes called the \emph{symmetric derivative} of the measure $\nu$ at $x_0$;
see e.g. \cite[Def 7.2]{Rud}.  

\

Consider now the Hermite operator  
\[
L=-\Dt+|x|^2,\quad \mbox{in $\SR^d$},
\]
and its associated Poisson semigroup 
\[
\SP_t=e^{-t\sqrt{L}}=\tfrac{t}{\sqrt{4\pi}}\,\int_0^\infty e^{-\frac{t^2}{4\tau}}\,e^{-\tau L}\,
\,\frac{d\tau}{\tau^{3/2}};
\] see e.g. \cite[Ch 2.2]{St70}. This semigroup (and its close relative involving the Ornstein-Uhlenbeck operator $-\Dt+2x\cdot\nabla$)
have been widely studied in Harmonic Analysis; see e.g. \cite{Muck1, Than93, ST03, LiuSjo16, Urb}.
We wish to find mild differentiability conditions on a measure $\nu$ at an individual point $x_0\in\SR^d$ 
so that the Poisson integrals
\Be
\label{Ptnu}
\SP_t\nu(x)=\int_{\SR^d}\SP_t(x,y)\,d\nu(y), \quad (t,x)\in(0,\infty)\times\SR^d,
\Ee
 have a non-tangential limit when $(t,x)\to(0,x_0)$. 

When $L$ is the Hermite operator, the kernel $\SP_t(x,y)$ can be obtained explicitly, see \eqref{Ptxy} below for a precise expression,
and its growth (for fixed $t$ and $x$) is well determined by the function
\Be
\Phi(y):=\frac{e^{-|y|^2/2}}{(1+|y|)^\frac d2\,[\log(e+|y|)]^\frac32}, \quad y\in\SR^d.
\label{Phi}
\Ee
Namely, it was shown in \cite[Lemma 4.1]{GHSTV} that
for each $t>0$ and $x\in\SR^d$ there exist $c_j(t,x)>0$, $j=1,2$, such that
\Be
\label{c12}
c_1(t,x)\,\Phi(y)\leq \SP_t(x,y)\leq\,c_2(t,x)\,\Phi(y),\quad y\in\SR^d.
\Ee
In particular, let $\cM(\Phi)$ denote the set of (locally finite) complex measures $\nu$ in $\SR^d$
such that 
\[
\int_{\SR^d}\Phi(y)\,d|\nu|(y)<\infty,
\] 
then it follows from \eqref{c12} that, if $\nu\in\cM(\Phi)$, the function $\SP_t\nu(x)$ in \eqref{Ptnu} is well-defined for all $t>0$ and $x\in\SR^d$.
Moreover,  $u(t,x)=\SP_t\nu(x)$ is smooth in $\SR^{d+1}_+=(0,\infty)\times\SR^d$ 
and satisfies the PDE
\[
u_{tt}=-\Dt_x u +|x|^2u,
\]
see e.g. \cite[Theorem 1.3]{FV23}. 

\

Our first result in this paper investigates the relation between the Lebesgue point condition, 
$x_0\in\cL_\nu$, and the existence of the non-tangential limit
\[
\lim_{
|x-x_0|<\al t\to 0} \SP_t\nu(x),
\]
for every $\al>0$. We shall prove the following

\begin{theorem}\label{th1}
Let $\nu\in\cM(\Phi)$ and $x_0\in\SR^d$. Then, the following assertions are equivalent
\Benu
\itemsep0.2cm
\item[(i)] \quad $x_0\in\cL_\nu$;

\item[(ii)] \quad $\Ds \lim_{t\to 0} \SP_t\big(|\nu-\ell\,dy|\big)(x_0)=0$, for some $\ell\in\SC$;

\item[(iii)] \;$\Ds \lim_{|x-x_0|<\al t\to 0} \SP_t\big(|\nu-\ell\,dy|\big)(x)=0$, for some $\ell\in\SC$ and some (all) $\al>0$.
\Eenu
Morever, if these assertions hold we can take $\ell=D\nu(x_0)$, and for every $\al>0$ it also holds
\Be
\lim_{|x-x_0|<\al t\to 0} \SP_t\nu(x) = D\nu(x_0).
\label{nt}
\Ee
\end{theorem}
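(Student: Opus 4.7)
My plan is to prove the three conditions equivalent cyclically via (iii)$\Rightarrow$(ii)$\Rightarrow$(i)$\Rightarrow$(iii) and then derive \eqref{nt} as a corollary. Throughout I write $\mu:=|\nu-\ell\,dy|$, which still lies in $\cM(\Phi)$ since $\Phi\in L^1(\SRd)$. The implication (iii)$\Rightarrow$(ii) is immediate: the center $x=x_0$ lies in every cone $\{|x-x_0|<\al t\}$, so one restricts to $x=x_0$ along $t\to 0$.

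For (ii)$\Rightarrow$(i) I would rely on a local lower bound of the form $\SP_t(x_0,y)\geq c\,t^{-d}$ valid for $|y-x_0|\leq t$ and $t$ small. Such a bound is sharper than \eqref{c12} near the diagonal but is standard for the Hermite--Poisson kernel; it can be established from the subordination formula together with the comparability of the Mehler kernel $M_\tau(x,y)$ with the Euclidean Gauss--Weierstrass kernel for small $\tau$, and should already be available in the cited literature. Integrating over $B_t(x_0)$ then yields $\SP_t\mu(x_0)\gtrsim t^{-d}\,\mu(B_t(x_0))$, so (ii) forces $\mu(B_t(x_0))/|B_t(x_0)|\to 0$, i.e.\ $x_0\in\cL_\nu$; and the identity \eqref{Dv} identifies $\ell=D\nu(x_0)$.

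The core of the argument is (i)$\Rightarrow$(iii). Fix $\al>0$, choose a small $r_0>0$, and decompose $\SP_t\mu(x)=I_0+I_1+I_2$ according to the integration regions $\{|y-x_0|<2\al t\}$, $\{2\al t\leq|y-x_0|<r_0\}$ and $\{|y-x_0|\geq r_0\}$. For $I_2$ (the far part) the dominated convergence theorem applies: $\SP_t(x,y)\leq c_2(t,x)\Phi(y)$ with $c_2(t,x)$ uniformly bounded as $(t,x)\to(0,x_0)$, $\SP_t(x,y)\to 0$ pointwise for $y\ne x_0$, and $\Phi\in L^1(d\mu)$. For $I_0$ (the core), the near-diagonal bound $\SP_t(x,y)\lesssim t/(t^2+|x-y|^2)^{(d+1)/2}\lesssim t^{-d}$ together with $\mu(B_{2\al t}(x_0))=o(t^d)$ is enough. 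For $I_1$ I would decompose dyadically into annuli $E_k=\{2^k\al t\leq|y-x_0|<2^{k+1}\al t\}$; the non-tangential condition $|x-x_0|<\al t$ forces $|x-y|\gtrsim 2^k\al t$ on $E_k$, so
\[
\int_{E_k}\SP_t(x,y)\,d\mu(y)\;\lesssim\;2^{-k}\,\frac{\mu(B_{2^{k+1}\al t}(x_0))}{(2^{k+1}\al t)^d}\;=\;2^{-k}\,\e_{2^{k+1}\al t},
\]
where $\e_r:=\mu(B_r(x_0))/r^d\to 0$ as $r\to 0$ by (i). Summing a geometric series with vanishing coefficient, after separating the small scales $r\leq r_*$ (on which $\e_r<\eta$) from the finitely many intermediate scales $r\in[r_*,r_0]$, gives $I_1\to 0$.

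Finally, \eqref{nt} follows from the identity
\[
\SP_t\nu(x)-\ell\,\SP_t\mathbf{1}(x)=\SP_t(\nu-\ell\,dy)(x),
\]
whose modulus is bounded by $\SP_t\mu(x)\to 0$ via (iii); it remains to check the approximate-identity property $\SP_t\mathbf{1}(x)\to 1$ as $(t,x)\to(0,x_0)$, which follows from subordination and the explicit computation $e^{-\tau L}\mathbf{1}(x)=(\cosh 2\tau)^{-d/2}e^{-\tfrac12\tanh(\tau)|x|^2}\to 1$ as $\tau\to 0^+$. The main obstacle in this plan is the dyadic step in $I_1$: one must handle uniformly the small-scale regime where Lebesgue-point decay is effective and the finitely many intermediate scales, keeping constants independent of the aperture $\al$ up to a controlled multiplicative factor.
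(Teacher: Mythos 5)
Your overall architecture matches the paper's: (iii)$\Rightarrow$(ii) is trivial, (ii)$\Rightarrow$(i) uses a pointwise lower bound $\SP_t(x_0,y)\gtrsim t^{-d}$ on $B_t(x_0)$ (this is the content of the paper's Lemma \ref{L0}, which the authors derive directly from the explicit formula \eqref{Ptxy} rather than via the Mehler kernel, but both routes are standard), and (i)$\Rightarrow$(iii) rests on a local/global decomposition of the kernel plus a dyadic annulus estimate against the Lebesgue-point rate $\e_r$. So the proof of the equivalence is essentially the paper's.

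There is, however, one genuine error you should fix in the $I_2$ estimate. You assert that $c_2(t,x)$ in \eqref{c12} is ``uniformly bounded as $(t,x)\to(0,x_0)$'', but this is false: since $\SP_t(x,x)\gtrsim t^{-d}$ while $\Phi(x)$ is a fixed positive number, one necessarily has $c_2(t,x)\gtrsim t^{-d}/\Phi(x)\to\infty$. So \eqref{c12} alone does not provide a $t$-uniform integrable majorant. This is precisely why the paper needs the refined Lemma \ref{L1}, which splits $\SP_t(x,y)\lesssim p_t(x-y)\bbone_{|y|\leq\ga\max\{|x|,1\}}+t\Phi(y)$: on the far region $|y-x_0|\geq r_0$ the Poisson piece is $\lesssim t/r_0^{d+1}$ and the $\Phi$-piece carries the $O(t)$ factor, so the integral over $I_2$ is $O(t)$. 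Your conclusion is correct, but the stated dominating function is not; you should replace the appeal to \eqref{c12} with Lemma \ref{L1} (this is what the paper does via \eqref{al1}--\eqref{al2}).

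For the final assertion \eqref{nt} you take a different, but valid, route from the paper. You pass through the identity $\SP_t\nu(x)=\ell\,\SP_t\mathbf{1}(x)+\SP_t(\nu-\ell\,dy)(x)$ and verify $\SP_t\mathbf{1}(x)\to 1$ by an explicit computation of $e^{-\tau L}\mathbf{1}(x)=(\cosh 2\tau)^{-d/2}e^{-\tfrac12\tanh(2\tau)|x|^2}$ (note the $2\tau$ inside the $\tanh$) combined with subordination. The paper instead uses the eigenvector $\psi(x)=e^{-|x|^2/2}$ with $L\psi=d\psi$, so that $\SP_t\psi=e^{-t\sqrt{d}}\psi$, and compares $\SP_t\nu$ against $\SP_t\psi$ rather than $\SP_t\mathbf{1}$ (Lemma \ref{L_la}). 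Both are correct for the Hermite operator; the eigenvector version is the one the authors favor because it avoids explicit kernel computations and extends verbatim to other positive self-adjoint $L$ admitting a regular positive eigenfunction, as they emphasize at the start of \S\ref{S_2-3}. Your explicit route is more self-contained but specific to Hermite.
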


As a second result, we give a weaker condition than Lebesgue point which still ensures non-tangential convergence. 
This is the notion of $\sigma$-point introduced by V. Shapiro in \cite{Sha06}; see also \cite{Sarkar21}. 
We say that $x_0\in\SR^d$ is a $\sigma$-point of a (locally finite) complex measure $\nu$, denoted $x_0\in\fS_\nu$, if
there exists $\ell\in\SC$ such that
\Be
\lim_{|x-x_0|+r\to0} \frac{\big|(\nu-\ell\,dy)(B_r(x))\big|}{(|x-x_0|+r)^d}=0.
\label{limsig}
\Ee
Note that, if $x_0$ is a $\sigma$-point, then $\nu$ has a symmetric derivative at $x_0$, and
we can take $\ell=D\nu(x_0)$ in \eqref{limsig}. 
This follows by just restricting the above limit to $x=x_0$. 
Also, since $B_r(x)\subset B_{r+|x-x_0|}(x_0)$, we have
\[
\cL_\nu\subset \fS_\nu,
\]
and the inclusion can be strict in view of the examples in \cite[\S3]{Sha06}.
Finally, when $d=1$ there is a simple characterization: write $\nu$ as a Lebesgue-Stieltjes measure $\nu=dm_F$, with
\[
\nu\big((a,b]\big)=F(b)-F(a);
\] 
see e.g. \cite[Thm 1.16]{Fol}. Then $x_0\in \fS_\nu$ iff $F$ is differentiable at $x_0$, in which case
\[
F'(x_0)=\ell=D\nu(x_0).
\]
The proof is elementary; see also \cite[Proposition 3.4]{Sarkar21}.

Our second result has the following statement.

\begin{theorem}\label{th2}
Let $\nu\in\cM(\Phi)$ and $x_0\in\SR^d$ a $\sigma$-point of $\nu$. Let $\ell=D\nu(x_0)$. 

\sline (i) If $d\in\{1,2,3\}$ then
\Be
\lim_{|x-x_0|<\al t\to 0} \SP_t\nu(x) = \ell 
, \quad \forall\,\al>0.
\label{nt2}
\Ee
\sline (ii) If $d\geq4$ then \eqref{nt2} holds if it is additionally assumed that
\Be
\lim_{r\to0}\frac{|\nu-\ell\, dy|(B_r(x_0))}{r^{d-3}}=0.
\label{d3}
\Ee
\end{theorem}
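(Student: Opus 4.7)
The plan is first to reduce to $\ell=0$, and then to decompose the Poisson-Hermite kernel into a classical Poisson piece plus a Mehler-type correction, each treated separately. I would apply Theorem~\ref{th1} to the Lebesgue measure $dy\in\cM(\Phi)$ (every point of which is a Lebesgue point with $D(dy)\equiv 1$), obtaining $\SP_t(dy)(x)\to 1$ non-tangentially at $x_0$. Setting $\mu:=\nu-\ell\,dy\in\cM(\Phi)$, the identity $\SP_t\nu(x)-\ell=\SP_t\mu(x)+\ell[\SP_t(dy)(x)-1]$ reduces the theorem to proving $\SP_t\mu(x)\to 0$ non-tangentially, where now $x_0\in\fS_\mu$ with value~$0$. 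I would then fix a small $\delta>0$ and split $\SP_t\mu(x)=I_{\mathrm{near}}+I_{\mathrm{far}}$ according to $|y-x_0|\le\delta$; the far part vanishes by dominated convergence, using \eqref{c12} together with the pointwise decay $\SP_t(x,y)\to 0$ for $y\ne x_0$ as $(t,x)\to(0,x_0)$.

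For the near part I would use the Mehler kernel and subordination to write
\[
\SP_t(x,y)=P_t(x-y)+K^{(1)}_t(x,y)+\cdots,
\]
where $P_t(x-y)=c_d\,t(t^2+|x-y|^2)^{-(d+1)/2}$ is the classical Poisson kernel of the half-space and $K^{(1)}_t(x,y)\sim t(|x|^2+|y|^2)(t^2+|x-y|^2)^{(1-d)/2}$ is the first correction coming from the Mehler Gaussian factor $e^{-\tanh(\tau)(|x|^2+|y|^2)/2}$. The $P_t$ contribution is treated by a layer-cake / integration-by-parts argument in $r=|y-x|$: using the radial structure of $P_t$ and the $\sigma$-point bound $|\mu(B_r(x))|\le\e(r+|x-x_0|)^d$, together with the scaling $r=ts$ and $|x-x_0|\le\al t$, one reduces to the convergent integral $\int_0^\infty s(s+1)^d/(1+s^2)^{(d+3)/2}\,ds$, yielding an $O(\e)$ bound and hence $o(1)$. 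This step is dimension-free and mirrors Shapiro's classical argument.

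The correction $K^{(1)}_t$ is the main obstacle and the source of the dimension threshold: since $|x|^2+|y|^2$ is not a function of $y-x$, the kernel $K^{(1)}_t$ is not radial about~$x$ and the signed $\sigma$-point bound cannot be invoked directly. The strategy is to split
\[
|x|^2+|y|^2=2|x_0|^2+(|x|^2-|x_0|^2)+(|y|^2-|x_0|^2).
\]
The constant piece makes the associated kernel radial in $y-x$, so the $\sigma$-point layer-cake applies and contributes only $O(\e\,t)$ (the kernel $K^{(1)}_t$ is two orders less singular than $P_t$, producing a prefactor $t^2$ under the scaling $r=ts$). The remaining deviation is $\le C(|x-x_0|+|y-x_0|)$ and must be integrated against $d|\mu|(y)$ with the weight $t(t^2+|x-y|^2)^{(1-d)/2}$, producing an effective singularity three orders weaker than $P_t$. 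A layer-cake now centered at $x_0$, using $\Psi(r):=|\mu|(B_r(x_0))$, shows that this integral is $o(1)$ precisely when $\Psi(r)=o(r^{d-3})$, which is exactly the hypothesis~\eqref{d3}. In dimensions $d\in\{1,2,3\}$ the exponent $d-3\le 0$ makes the condition automatic: trivial for $d\le 2$, and for $d=3$ reducing to $|\mu|(\{x_0\})=0$, which follows from the $\sigma$-point property (a $\sigma$-point cannot carry an atom). Higher-order corrections $K^{(k)}_t$ for $k\ge 2$ are strictly less singular (each extra $\tau$-factor in the subordination integral raises the exponent of $(t^2+|x-y|^2)$ by~$+1$) and can be bounded under the same or weaker hypotheses, so the $k=1$ term is binding.
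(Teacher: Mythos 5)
Your proposal identifies the correct mechanism and arrives at the same dimension threshold, but the kernel decomposition you use is genuinely different from the paper's, and in a way that creates extra technical burden you don't fully discharge. You propose to Taylor-expand the whole Mehler weight $e^{-\tanh\tau\,(|x|^2+|y|^2)/2}$ around $1$ (together, implicitly, with the replacement $\sinh 2\tau\approx 2\tau$), producing the classical half-space Poisson kernel $P_t$ as leading term plus an infinite string of corrections $K^{(k)}$; the paper instead only factors out the single non-radial term $e^{-s\,x\cdot h}$ from $e^{-s|x+y|^2/4}$ (with $h=y-x$), leaving an exact radial kernel $\cK_{t,x}(|h|)$ which is not the classical Poisson kernel but retains all the $\sinh$-type factors, and a single remainder $\cR_{t,x}$ controlled by the elementary bound $|e^{-u}-1|\le |u|e^{|u|}$. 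The paper's version buys two things your sketch leaves open: (a) there is only one remainder, so you never have to sum the series $\sum_k K^{(k)}$ or quantify the cumulative effect of the $\sinh\to 2\tau$, $\tanh\to\tau$ approximations, and (b) the layer-cake/integration-by-parts step needs the radial kernel to be monotone decreasing in $|h|$ — this is manifestly true for the exact $\cK_{t,x}$, whereas for your $P_t$ you would additionally have to argue that the approximation errors already absorbed into $K^{(k)}$ have not disturbed that structure, or carry out the layer-cake for the exact subordination integral rather than the literal $P_t$.

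That said, the heart of your argument is the same as the paper's. Your split $|x|^2+|y|^2 = 2|x_0|^2 + (|x|^2-|x_0|^2) + (|y|^2-|x_0|^2)$, with the constant piece giving a harmless radial kernel and the deviation piece of size $\lesssim |x-x_0|+|y-x_0|\approx t+|y-x|$ producing an effective kernel $\sim t(t^2+|x-y|^2)^{-(d-2)/2}$, reproduces exactly the paper's bound $|\cR_{t,x}(h)|\lesssim t|x||h|(t^2+|h|^2)^{-(d-1)/2}$ and the ensuing estimate $t\int (t+|h|)^{2-d}d|\nu_x|(h)$, which is precisely where the hypothesis $|\nu-\ell\,dy|(B_r(x_0))=o(r^{d-3})$ enters for $d\ge 4$. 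Your observations that for $d\le 2$ this is automatic, and that for $d=3$ it reduces to $|\mu|(\{x_0\})=0$ which every $\sigma$-point satisfies, are both correct and in fact make explicit a point that the paper only implicitly uses in its dominated-convergence argument for $d=3$. Your reduction to $\ell=0$ via $\SP_t(dy)(x)\to 1$ non-tangentially is also a valid alternative to the paper's Lemma \ref{L_la} (indeed it is the special case $\nu=dy$ of that lemma). So the proposal would work after filling in the control of the higher-order corrections and the $\sinh$-approximation; as written, that is the one missing step, and it is the step the paper's decomposition is designed to avoid.
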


As a corollary, when $d=1$ we obtain the non-tangential convergence under the classical Fatou condition, see \cite{Fat06} or 
\cite[Thm III.7.9.ii]{Zyg}. This result seems to be new for Poisson-Hermite integrals.

\begin{corollary}\label{c1}
Let $d=1$ and $\nu=m_F$ be a Lebesgue-Stieljes measure in $\cM(\Phi)$. If $F$ is differentiable in $x_0$ then
\[
\lim_{|x-x_0|<\al t\to 0} \SP_t\nu(x) = F'(x_0), \quad \forall\,\al>0.
\]
\end{corollary}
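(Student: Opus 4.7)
The plan is to derive Corollary \ref{c1} as an immediate consequence of Theorem \ref{th2}(i) applied in dimension $d=1$, combined with the one-dimensional characterization of $\sigma$-points recalled just before the statement of Theorem \ref{th2}.

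More precisely, the first step is to translate the hypothesis ``$F$ is differentiable at $x_0$'' into the condition $x_0 \in \fS_\nu$. This is exactly the equivalence stated in the excerpt (and proved in \cite[Proposition 3.4]{Sarkar21}): for $d=1$, writing $\nu = m_F$, the point $x_0$ is a $\sigma$-point of $\nu$ if and only if $F$ is differentiable at $x_0$, and in that case $F'(x_0) = D\nu(x_0)$. So from the hypothesis we obtain both $x_0 \in \fS_\nu$ and the identification of the limiting value $\ell = D\nu(x_0) = F'(x_0)$.

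The second step is to invoke Theorem \ref{th2}(i): since $d=1 \in \{1,2,3\}$, no additional decay assumption such as \eqref{d3} is needed, and we conclude
\[
\lim_{|x-x_0|<\al t \to 0} \SP_t \nu(x) = \ell = F'(x_0), \quad \forall\, \al>0,
\]
which is exactly the conclusion of the corollary.

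There is no genuine obstacle here: the whole content of the corollary lies in Theorem \ref{th2} (whose proof is the substantive part of the paper) together with the elementary one-dimensional fact that differentiability of the distribution function $F$ coincides with the $\sigma$-point property of the associated Lebesgue--Stieltjes measure. The only minor thing worth double-checking is that $\nu \in \cM(\Phi)$, but this is assumed in the statement, so the application of Theorem \ref{th2} is direct.
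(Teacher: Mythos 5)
Your proposal is correct and follows exactly the route the paper intends: the corollary is stated as an immediate consequence of Theorem \ref{th2}(i) for $d=1$, using the one-dimensional equivalence between differentiability of $F$ at $x_0$ and the $\sigma$-point property of $\nu=m_F$ (with $F'(x_0)=D\nu(x_0)$) recalled just before that theorem. Nothing is missing.
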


\

Finally, we remark that, when $x_0=0$, one can slightly relax the condition \eqref{d3} (and even remove it, if $d=4$), due to
the special symmetry of the resulting kernel $\SP_t(0,y)$; see a precise statement in Theorem \ref{th3} below.

\

We comment on previous results on these matters for the Poisson-Hermite semigroup. When the initial datum is a function, that is $\nu=f\,dy$, 
generic results on a.e. convergence of $\SP_tf$ go back to \cite{Muck1}, see also \cite{ST03, GHSTV}. These references contain the main kernel estimates, but do not consider
convergence at individual Lebesgue points of $f$. A statement for these was recently given in \cite{GHSV} (for the related Laguerre operator),
and requires an argument that we elaborate further here in the new Lemma \ref{L_la}. Concerning measures $\nu$ as initial data, we are only aware of 
the a.e. results for $\SP_t\nu$ in \cite[Theorem 1.3]{FV23}, which again do not consider the behavior at individual points.

\

Regarding Theorem \ref{th2}, no results involving weaker notions than Lebesgue points seem to appear in the literature
for Poisson-Hermite integrals, even when $\nu$ equals $f$ or $d=1$. 
However, there is an extensive literature in the classical setting of harmonic functions
regarding optimal conditions for non-tangential convergence; see e.g. \cite{Loo43,RaUl88, BrCh90}, or the more recent
\cite{Sae96, Sha06, Sarkar21} and references therein.
The latter papers consider a general setting of approximations of the identity,
but always associated with convolution kernels with a dilation structure, that is, 
\[
K_t(x,y)=t^{-d}\oldphi((x-y)/t), \quad t>0,
\]
and most often with a radially decreasing $\oldphi$.

One novelty here regards the fact that one must consider kernels $\SP_t(x,y)$ without this convolution structure, 
so different arguments are necessary to carry out the proofs. Our arguments will be based on 
a suitable kernel decomposition into radial and non-radial parts, and very precise estimates on the kernel
which are optimal for dimensions $d\leq3$, but require the additional condition \eqref{d3} when $d\geq4$.
We do not know whether Theorem \ref{th2} may hold without this hypothesis (if $x_0\not=0$), although
it seems unlikely in view of examples in \cite{FGV2}, where a similar condition in relation with the \emph{normal} 
convergence of $\SP_t\nu(x_0)$ appears for dimensions $d\geq4$, and cannot be removed in that setting.
We nevertheless remark that \eqref{d3} is quite mild (certainly weaker than the Lebesgue point condition),
due to the factor $r^{d-3}$ in the denominator.

\

The paper is structured as follows. In section \ref{S_2} we compile the notation and the required kernel estimates,
and present the proof of Theorem \ref{th1}. We establish in \S\ref{S_2-3} a general Lemma \ref{L_la}, which may have 
an independent interest. Finally, in section \ref{S_3} we give a detailed proof of Theorem \ref{th2}.
\

\section{Proof of Theorem \ref{th1}}\label{S_2}

It is clear that (iii) implies (ii). We shall show in separate subsections the other implications.
Below we use the notation
\[
p_t(z)=
\frac{t}{(t^2+|z|^2)^{\frac{d+1}2}},
\]
which except for a multiplicative constant is the standard Poisson kernel.
Recall also that the Poisson kernel $\SP_t(x,y)$ associated with the Hermite operator
can be explicitly given by
\Be
\label{Ptxy}
\SP_t(x,y)=\,c_d\,t\,\int_0^1\frac{e^{-\frac{t^2}{\ell(s)}}\,(1-s^2)^{\frac d2-1}\,e^{-\frac14(\frac{|x-y|^2}s+s|x+y|^2)}}
{s^{\frac d2}\,\big(\ell(s)\big)^{3/2}}\,ds,
\Ee
where $\ell(s)=2\ln\frac{1+s}{1-s}$ and $c_d
>0$; see e.g. \cite[(4.1)]{GHSTV}. Note that $\ell(s)\approx s$ when $s\in(0,1/2)$, a fact that we shall use often below.

\subsection{Proof of $(ii)\Longrightarrow(i)$}

We have the following estimate from below for the kernel $\SP_t(x,y)$.

\begin{lemma}\label{L0}
Given $R\geq1$, there exists a constant $\dt_R>0$ such that
\Be
\label{pbelow}
\SP_t(x,y)\,\geq\, \dt_R\;p_t(x-y), \quad \mbox{when $|x|,|y|, t\leq R$}.
\Ee
\end{lemma}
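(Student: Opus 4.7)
The strategy is to restrict the integral representation \eqref{Ptxy} of $\SP_t(x,y)$ to the range $s\in(0,1/2)$, where the factor $\ell(s)=2\ln\tfrac{1+s}{1-s}$ is comparable to $s$ with absolute constants (say $c_1 s\le\ell(s)\le c_2 s$). In that range, $(1-s^2)^{d/2-1}$ is bounded below by a positive absolute constant and, using $|x+y|\le 2R$ together with $s\le 1/2$, the Gaussian factor $e^{-s|x+y|^2/4}$ is bounded below by $e^{-R^2/2}$. Collecting these constants, and writing
\[
e^{-t^2/\ell(s)}\,e^{-|x-y|^2/(4s)}\;\ge\;e^{-A/s}\qquad\text{with }A=t^2/c_1+|x-y|^2/4,
\]
I would thus obtain
\[
\SP_t(x,y)\;\ge\; C_R\,t\int_0^{1/2}\frac{e^{-A/s}}{s^{(d+3)/2}}\,ds
\]
for some $C_R>0$ depending only on $R$.

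The next step is to bound the remaining $s$-integral from below in terms of $A$. Changing variables via $u=1/s$ turns it into $\int_2^\infty u^{(d-1)/2}e^{-Au}\,du$, and a further substitution $v=Au$ yields
\[
\int_0^{1/2}\frac{e^{-A/s}}{s^{(d+3)/2}}\,ds \;=\; A^{-(d+1)/2}\int_{2A}^{\infty} v^{(d-1)/2}e^{-v}\,dv.
\]
Under the hypothesis $|x|,|y|,t\le R$ we have $A\le C\,R^2$, so $2A$ stays in a bounded interval and the truncated gamma integral is bounded below by a positive constant depending only on $R$. On the other hand, the straightforward bound $A\le C\,(t^2+|x-y|^2)$ gives $A^{-(d+1)/2}\ge C^{-(d+1)/2}\,(t^2+|x-y|^2)^{-(d+1)/2}$.

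Combining these two estimates I arrive at
\[
\SP_t(x,y)\;\ge\; \delta_R\;\frac{t}{(t^2+|x-y|^2)^{(d+1)/2}}\;=\;\delta_R\,p_t(x-y),
\]
which is the desired conclusion. I do not expect any serious obstacle: the only step that requires attention is propagating the $R$-dependence carefully through the lower bound on $\int_{2A}^{\infty}v^{(d-1)/2}e^{-v}\,dv$, which shrinks as $A$ grows and hence forces $\delta_R\to 0$ as $R\to\infty$---this is expected and consistent with the upper bound in \eqref{c12}.
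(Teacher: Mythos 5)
Your proof is correct and follows essentially the same strategy as the paper: restrict \eqref{Ptxy} to $s\in(0,1/2)$ so that $\ell(s)\approx s$, bound $(1-s^2)^{d/2-1}$ and $e^{-s|x+y|^2/4}$ below by $R$-dependent constants, and change variables to turn the remaining $s$-integral into $A^{-(d+1)/2}$ times a truncated gamma integral that stays bounded below because $A\lesssim R^2$. The only cosmetic difference is that the paper performs the substitution $u=(ct^2+|x-y|^2)/(4s)$ in one step and shrinks the range to $(0,1/4)$, while you go through $u=1/s$ and $v=Au$; the resulting estimate and $R$-dependence are the same.
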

\begin{proof}
If we restrict the range of integration in \eqref{Ptxy} to $s\in(0,1/2)$, so that 
$\ell(s)\approx s$, after disregarding inessential terms we have
\Beas
\SP_t(x,y)  & \gtrsim & t\,\int_0^{1/2} \frac{ e^{-\frac{c\,t^2}{4s}} e^{-\frac14(\frac{|x-y|^2}s+s|x+y|^2)} }{ s^{\frac{d+3}2} } 
\, ds\\
& \geq & e^{-\frac{R^2}2}\, t\,\int_0^{1/4} \frac{ e^{-\frac{c\,t^2+|x-y|^2}{4s}} 
 }{ s^{\frac{d+1}2} } 
\, \frac{ds}{s}\\
& = &  \frac {c_R\,t}{(ct^2+|x-y|^2)^{\frac{d+1}2}}\,
\int_{ct^2+|x-y|^2}^\infty e^{-u} u^{\frac{d+1}2}\,\frac{du}u,
\Eeas
performing in the last step the change of variables $u=(c\,t^2+|x-y|^2)/(4s)$.
Using again the assumption $|x|, |y|, t\leq R$, the last integral is bounded below by
\[
\int_{(4+c)R^2}^\infty e^{-u} u^{\frac{d+1}2}\,\frac{du}u=c'_R.
\]
Combining these expressions one obtains \eqref{pbelow}.
\end{proof}

Going back to the proof of Theorem \ref{th1}, assume that (ii) holds, and denote $|\nu-\ell dx|=\mu$.
Pick $R\geq|x_0|+1$ and $t\leq 1$. Then
the previous lemma gives
\Beas
\SP_t\big(\mu\big)(x_0) & \gtrsim &  \dt_R\,\int_{|y-x_0|\leq t} p_t(x_0-y)\,d\mu(y) .
\Eeas
Since $p_t(x_0-y)\approx t^{-d}$ when $|x_0-y|\leq t$, this implies
\[
\SP_t\big(\mu\big)(x_0) \, \gtrsim\,\frac{\mu\big(B_t(x_0)\big)}{|B_t(x_0)|}.
\]
By (ii), the left-hand side goes to 0 as $t\to0$, hence so does the right hand side. 
Since $\mu=|\nu-\ell\,dx|$, this implies that $x_0$ is a Lebesgue point of $\nu$ (and therefore, also that $\ell=D\nu(x_0)$).

\subsection{Proof of $(i)\Longrightarrow(iii)$}

We shall use the following upper bound for $\SP_t(x,y)$, which can be found in \cite[Lemma 4.2]{GHSTV}.

\begin{lemma}\label{L1}
There exists a constant $\ga\geq2$ and a continuous function $C(x)$ such that
\[
\SP_t(x,y)\leq C(x)\,\Big[p_t(x-y)\bbone_{|y|\leq \gamma\max\{|x|,1\}}\,+\,t\,\Phi(y)\Big],
\]
for all $t>0$ and $x,y\in\SR^d$.
\end{lemma}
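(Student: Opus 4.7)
My approach is to start from the explicit integral representation \eqref{Ptxy} and bound the integrand as a function of $s\in(0,1)$. The crucial algebraic observation is the parallelogram-type identity
\[
\tfrac14\bigl(\tfrac{|x-y|^2}{s}+s|x+y|^2\bigr)=\tfrac{s^2+1}{4s}(|x|^2+|y|^2)-\tfrac{1-s^2}{2s}\,x\cdot y,
\]
combined with AM--GM: $\tfrac{|x-y|^2}{s}+s|x+y|^2\geq 2|x-y||x+y|\geq 2\bigl||x|^2-|y|^2\bigr|$. These together show that, for all $s\in(0,1]$ and all $y$ with $|y|>|x|$, $e^{-\frac14(|x-y|^2/s+s|x+y|^2)}\leq e^{|x|^2/2}\,e^{-|y|^2/2}$, which is the source of the Gaussian factor in $\Phi(y)$, the $e^{|x|^2/2}$ being absorbed into $C(x)$.

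The plan is then to split the $s$-integral at $s=1/2$. On $(0,1/2]$ one has $\ell(s)\asymp s$ and $(1-s^2)^{d/2-1}$ bounded, and the standard substitution $u=(ct^2+|x-y|^2)/(4s)$ converts the integral into an incomplete Gamma function, producing a contribution of size $C(x)\,p_t(x-y)$ when $|y|\leq R:=\gamma\max\{|x|,1\}$ (in which range the cross term $e^{-s|x+y|^2/4}$ is bounded uniformly in $x$), and of size $C(x)\,t\,\Phi(y)$ when $|y|>R$ (where the refined estimate above provides $e^{-|y|^2/2}$). On $[1/2,1)$, $\ell(s)$ is bounded below so $e^{-t^2/\ell(s)}\leq 1$, and the only singular behaviour comes from $(1-s^2)^{d/2-1}$ (integrable) together with $\ell(s)^{-3/2}\asymp[\log(1/(1-s))]^{-3/2}$. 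Using $\tfrac{s^2+1}{4s}|y|^2=|y|^2/2+(1-s)^2|y|^2/(4s)$, the exponent factors as $e^{-|y|^2/2}$ times a Gaussian in $(1-s)|y|$, which for large $|y|$ concentrates the integrand on a window of width $O(1/|y|)$ near $s=1$ where $\ell(s)\asymp\log|y|$ and $(1-s^2)^{d/2-1}\asymp(1-s)^{d/2-1}$.

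The hard part will be this Laplace-type analysis near $s=1$: extracting the precise factors $(1+|y|)^{-d/2}$ and $[\log(e+|y|)]^{-3/2}$ of $\Phi(y)$ from the concentration window, uniformly in $t>0$. A dyadic decomposition of $r:=1-s$, distinguishing $r\lesssim 1/|y|$ from $r\gtrsim 1/|y|$ (where the Gaussian in $r|y|$ yields super-polynomial decay), makes this quantitative. Continuity of $C(x)$ in $x$ follows from the explicit form of the constants (involving $e^{|x|^2/2}$ together with polynomials in $R=\gamma\max\{|x|,1\}$), and choosing $\gamma\geq2$ ensures that $|y|>R$ implies $|y|^2-|x|^2\geq 3|y|^2/4$, so that no dependence on $|x|$ is lost when passing from the pointwise bound to the $\Phi(y)$-estimate.
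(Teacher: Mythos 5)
The paper does not prove Lemma \ref{L1} internally; it cites \cite[Lemma 4.2]{GHSTV}, so there is no ``paper's own proof'' to compare against. Your overall structure---starting from \eqref{Ptxy}, splitting the $s$-integral at $1/2$, using the parallelogram identity, and a Laplace-type concentration analysis in $r=1-s$ near $s=1$---is the right way to prove this bound.

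There is, however, a genuine gap in your treatment of the region $s\in(0,1/2]$, $|y|>R=\gamma\max\{|x|,1\}$. You invoke two ingredients that cannot be combined on the same factor $e^{-\frac14(|x-y|^2/s+s|x+y|^2)}$: the AM--GM bound $e^{-\frac14(\cdot)}\leq e^{|x|^2/2}e^{-|y|^2/2}$ is a one-shot lower bound on the exponent, constant in $s$, and if you use it the remaining integral $t\int_0^{1/2}e^{-ct^2/s}s^{-(d+3)/2}\,ds\sim t^{-d}$ loses all $t$-control; whereas the substitution $u=(ct^2+|x-y|^2)/(4s)$ retains $e^{-|x-y|^2/(4s)}$, but after the incomplete-Gamma asymptotics you only recover $e^{-|x-y|^2/2}$, which for $|y|>\gamma|x|$ is merely $e^{-(1-1/\gamma)^2|y|^2/2}$. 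Since $(1-1/\gamma)^2<1$, the surplus factor $e^{[1-(1-1/\gamma)^2]|y|^2/2}$ overwhelms the polynomial and logarithmic corrections in $\Phi$, so this route does not give $t\,\Phi(y)$ for large $|y|$ no matter how $\gamma$ is chosen. The exponent $1/2$ in $\Phi$ is sharp (it is the ground-state Gaussian) and must be extracted exactly, with something left over for the $s$-integral.

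The fix, which also tidies your analysis near $s=1$, is the exact completion-of-the-square identity
\[
\tfrac14\Bigl(\tfrac{|x-y|^2}{s}+s\,|x+y|^2\Bigr)
=\tfrac{|y|^2-|x|^2}{2}+\tfrac{1}{4s}\bigl|(1-s)\,y-(1+s)\,x\bigr|^2,
\]
which you can derive from your parallelogram identity by completing the square in $y$. The first term gives exactly $e^{|x|^2/2}e^{-|y|^2/2}$, and the residual $\frac{1}{4s}|(1-s)y-(1+s)x|^2$ is nonnegative and carries the $s$-dependence. For $s\leq 1/2$ and $|y|>\gamma\max\{|x|,1\}$ with $\gamma$ large enough (say $\gamma\geq 8$), one has $|(1-s)y-(1+s)x|\geq\tfrac12|y|-\tfrac32|x|\geq\tfrac14|y|$, so the residual is $\geq|y|^2/(64s)$; combined with $e^{-ct^2/s}$ this furnishes both the convergence at $s=0$ (via your $u$-substitution, now with $u\sim(t^2+|y|^2)/s$) and superfluous extra Gaussian decay, giving a bound $\lesssim C(x)\,t\,e^{-|y|^2/2}e^{-c|y|^2}/|y|^2$, comfortably dominated by $C(x)\,t\,\Phi(y)$. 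For $s\in[1/2,1)$ the same identity makes your Laplace window analysis rigorous, since the cross term $x\cdot y$ is already absorbed into the residual $|(1-s)y-(1+s)x|^2$, and in the window $1-s\lesssim 1/|y|$ the factor $e^{\frac{1-s^2}{2s}x\cdot y}$ is at most $e^{C|x|}$, absorbable into $C(x)$.
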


We also quote the following elementary lemma.

\begin{lemma}\label{L2}
Let $\al>0$. Then, there exists $C_\al>0$ such that
\Be
\label{cal}
\frac1{C_\al}\,\big(|x_0-y|+t\big)\leq |x-y|+t\leq C_\al\,\big(|x_0-y|+t\big),
\Ee
whenever $|x-x_0|\leq\al t$ and $y\in\SR^d$.
\end{lemma}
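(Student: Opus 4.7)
The plan is to verify both inequalities directly from the triangle inequality, since the hypothesis $|x-x_0|\le \alpha t$ lets us absorb $|x-x_0|$ into a multiple of $t$.

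For the upper bound, I would write
\[
|x-y|+t \;\le\; |x-x_0| + |x_0-y| + t \;\le\; \alpha t + |x_0-y| + t \;=\; |x_0-y| + (1+\alpha)t,
\]
and then bound the right-hand side by $(1+\alpha)\bigl(|x_0-y|+t\bigr)$. Symmetrically, swapping the roles of $x$ and $x_0$ and using again $|x-x_0|\le \alpha t$,
\[
|x_0-y|+t \;\le\; |x-x_0| + |x-y| + t \;\le\; |x-y| + (1+\alpha)t \;\le\; (1+\alpha)\bigl(|x-y|+t\bigr).
\]
Hence $C_\alpha = 1+\alpha$ works for both inequalities.

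There is no real obstacle here; the lemma is purely a quantitative restatement of the fact that, on a non-tangential cone $|x-x_0|\le\alpha t$, the quantity $|x-y|+t$ is equivalent to $|x_0-y|+t$ uniformly in $y\in\SR^d$ and $t>0$. I would state the proof in one short paragraph with the two displays above and conclude.
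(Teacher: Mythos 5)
Your proof is correct and is essentially the same as the paper's: both use the triangle inequality to get $|x-y|+t\le |x_0-y|+(1+\alpha)t\le(1+\alpha)(|x_0-y|+t)$, and then obtain the lower bound by interchanging the roles of $x$ and $x_0$, yielding $C_\alpha=1+\alpha$.
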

\begin{proof}
The proof is elementary using the condition $|x-x_0|\leq\al t$ and the triangle inequality. 
Indeed, on the one hand
\[
|x-y|+t\leq |x-x_0|+|x_0-y|+t\leq |x_0-y|+(\al+1)t,
\]
which implies the upper bound in \eqref{cal} with $C_\al=\al+1$.
The lower bound follows from the upper one after interchanging the roles of $x$ and $x_0$.  
\end{proof}

We turn to the proof of (iii) in Theorem \ref{th1}. Since we are interested in non-tangential limits at $x_0$, we fix $\al>0$ and
consider only points $(t,x)$ such that $|x-x_0|\leq\al t\leq 1$. For such points, Lemmas \ref{L1} and \ref{L2} imply that
\[
\SP_t(x,y)\lesssim C_{x_0}\,\Big[p_t(x_0-y)\bbone_{|y|\leq \gamma(|x_0|+2)}\,+\,t\,\Phi(y)\Big],\quad \forall\,y\in\SR^d,
\]
for some constant $C_{x_0}>0$ (which we could take $\max_{|x|\leq|x_0|+1}C(x)$).
For simplicity, we write
\[
K=\big\{y\in\SR^d\mid |y|\leq\ga(|x_0|+2)\big\},
\]
and as before we denote  $\mu=|\nu-\ell dx|$. The previous inequalities then imply
\Be
\SP_t\mu(x)\,\lesssim\,C_{x_0}\,\Big[\int_{K}p_t(x_0-y)\,d\mu(y)\,+\,t\,\int_{\SR^d}\Phi(y)\,d\mu(y)\Big].
\label{al1}
\Ee
The assumption that $\nu\in\cM(\Phi)$ implies that the last summand is $O(t)$ as $t\to 0$, so we must only
estimate the first term
\[
\int_{K}p_t(x_0-y)\,d\mu(y).
\]
Here one could use the known results about the standard Poisson kernel to conclude that
\Be
\label{lim1}
\lim_{t\to0}\int_{K}p_t(x_0-y)\,d\mu(y)=0,
\Ee
under the assumption $x_0\in\cL_\nu$. We briefly sketch the argument for completeness. 
Given $\e>0$, we choose $\dt>0$ such that
\Be
\label{eps}
\frac{\mu\big(B_r(x_0)\big)}{r^d}<\e, \quad r\in(0,2\dt].
\Ee
When $|y-x_0|\geq\dt$ we have $p_t(x_0-y)\leq t/\dt^{d+1}$, and hence
\Be
\label{al2}
\int_{y\in K\setminus B_\dt(x_0)}p_t(x_0-y)\,d\mu(y)\leq \,\frac{t\,\mu(K)}{\dt^{d+1}}<\e,
\Ee
if $t<t_0:=\min\{\e\,\dt^{d+1}/\mu(K),\dt\}$. On the other hand, given any such $t$, we can find an integer $J\in\SN$ such that
\[
\dt<2^Jt\leq 2\dt.
\]
So letting $S_j:=B_{2^{j}t}(x_0)\setminus B_{2^{j-1}t}(x_0)$ and using that $p_t(x_0-y)\lesssim t^{-d}2^{-j(d+1)}$ in $S_j$ 
we have
\Beas
\int_{B_\dt(x_0)}p_t(x_0-y)\,d\mu(y) & \lesssim & \int_{B_t(x_0)}t^{-d}\,d\mu(y) +\sum_{j=1}^J
\int_{S_j}(2^jt)^{-d}2^{-j}\,d\mu(y)\\
& \leq & \frac{\mu(B_t(x_0))}{t^d}+\sum_{j=1}^J 2^{-j}\frac{\mu(B_{2^jt}(x_0))}{(2^jt)^d}\\
& \leq & 2\e,
\Eeas
using \eqref{eps} in the last step. This shows \eqref{lim1}.

\subsection{Last assertion in Theorem \ref{th1}}\label{S_2-3}
It remains to prove that, under the assumption (iii), it also holds
\[
\lim_{|x-x_0|<\al t\to 0} \SP_t\nu(x) = \ell.
\] 
This will be a consequence of the following lemma.

\begin{lemma}
\label{L_la}
Let $\nu\in\cM(\Phi)$ and $\ell\in\SC$. Then the function
\[
E(t,x):=\,\big(\SP_t\nu(x)-\ell\big) - \SP_t(\nu-\ell\,dy)(x) 
\]
satisfies
\[
\lim_{(t,x)\to(0,x_0)} E(t,x)=0, \quad \forall\,x_0\in\SR^d.
\]
\end{lemma}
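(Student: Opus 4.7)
The first step is to reduce $E$ to an error term stemming from the fact that the semigroup $\SP_t$ does not preserve constants. Since the measure $dy$ itself lies in $\cM(\Phi)$ (because $\Phi\in L^1(\SR^d)$), the quantity $\SP_t\mathbf{1}(x):=\int_{\SR^d}\SP_t(x,y)\,dy$ is a well-defined smooth function, and linearity of $\SP_t$ gives
\[
\SP_t(\nu-\ell\,dy)(x) \;=\; \SP_t\nu(x)\;-\;\ell\,\SP_t\mathbf{1}(x).
\]
Therefore $E(t,x)=\ell\bigl(\SP_t\mathbf{1}(x)-1\bigr)$, and the lemma is equivalent to the single assertion
\[
\lim_{(t,x)\to(0,x_0)}\SP_t\mathbf{1}(x)\;=\;1.
\]
This is non-trivial: unlike the classical Poisson kernel on $\SR^d$, constants are not fixed by $\SP_t$, since $L\mathbf{1}=|x|^2\not\equiv 0$.

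To prove the limit, the plan is to compute $\SP_t\mathbf{1}(x)$ explicitly from \eqref{Ptxy} by Fubini. The $y$-integral is Gaussian: completing the square in $y$ inside the exponent $\tfrac14(|x-y|^2/s+s|x+y|^2)$ produces
\[
\int_{\SR^d}e^{-\frac14(|x-y|^2/s+s|x+y|^2)}\,dy \;=\; \Bigl(\tfrac{4\pi s}{1+s^2}\Bigr)^{\!d/2}\exp\Bigl(-\tfrac{s|x|^2}{1+s^2}\Bigr),
\]
reducing $\SP_t\mathbf{1}(x)$ to a single integral over $s\in(0,1)$ in which $x$ enters only through the factor $\exp(-s|x|^2/(1+s^2))\in(0,1]$. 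Since $e^{-t^2/\ell(s)}$ suppresses the contribution from any region $s\geq\dt>0$ (producing at most $O(t)$), the limit is governed by $s\to 0^+$, where $\ell(s)\sim 4s$. The substitution $u=t^2/(4s)$ then brings the integral into the form $(4\pi)^{-1/2}\int_0^\infty e^{-u}\,G_{t,x}(u)\,u^{-1/2}\,du$, where $|G_{t,x}|\leq 1$ and $G_{t,x}(u)\to 1$ locally uniformly in $x$ for each fixed $u$. Dominated convergence, combined with $\int_0^\infty e^{-u}u^{-1/2}\,du=\sqrt\pi$, yields the value $1$, locally uniformly in $x$.

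The delicate points are the verification of uniformity in $x$ near $x_0$ (which follows since $x$ enters only through smooth, uniformly bounded factors that tend to $1$), and the confirmation that the normalization constant $c_d$ in \eqref{Ptxy} is precisely the one forced by the requirement that $\SP_t$ be the semigroup associated with $L$, so that the limit equals $1$ and not some other positive constant. An alternative route that sidesteps the direct $s$-integral computation uses Mehler's formula, $(e^{-\tau L}\mathbf{1})(x)=(\cosh 2\tau)^{-d/2}\exp(-\tfrac{|x|^2}{2}\tanh 2\tau)$, together with the subordination identity displayed in the introduction; both approaches reduce to essentially the same dominated-convergence argument.
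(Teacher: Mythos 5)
Your proposal is correct, but it takes a genuinely different route from the paper. You reduce $E(t,x)=\ell\bigl(\SP_t\mathbf{1}(x)-1\bigr)$ by linearity (valid because $\Phi\in L^1$, so the Lebesgue measure $dy$ lies in $\cM(\Phi)$), and then verify the single limit $\SP_t\mathbf{1}(x)\to1$ by explicit Gaussian integration, or more cleanly via Mehler's formula $(e^{-\tau L}\mathbf{1})(x)=(\cosh2\tau)^{-d/2}\exp(-\tfrac{|x|^2}{2}\tanh2\tau)$ combined with the subordination identity and dominated convergence. The Mehler route is tight and has the additional advantage of bypassing any need to confirm the normalization constant $c_d$ in \eqref{Ptxy}. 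The paper instead avoids explicit kernel computations altogether: it uses the positive regular eigenfunction $\psi(x)=e^{-|x|^2/2}$ with $L\psi=d\psi$, so that $\SP_t\psi=e^{-t\sqrt d}\psi$, and decomposes $\SP_t\nu(x)-\ell$ around $\psi$ rather than around $\mathbf{1}$; the resulting pieces are controlled via continuity of $\psi$ and the kernel bound of Lemma~\ref{L1} applied to the continuous datum $\psi-\psi(x_0)$. What each approach buys: yours is more direct and computational, making essential use of the closed-form Hermite formulas; the paper's is structural and ports to any positive self-adjoint $L$ admitting a smooth strictly positive eigenfunction (the Laguerre case is cited), since it never needs an explicit formula for $\SP_t\mathbf{1}$. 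Both are sound; the only small caveat in your write-up is that the direct $s$-integral version should be carried out with the care you indicate near $s=1$ (where $(1-s^2)^{d/2-1}$ is merely integrable for $d=1$), but the Mehler version you also propose sidesteps this entirely.
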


Indeed, assuming the lemma and using (iii) we have
\[
\big|\SP_t\nu(x)-\ell\big|\leq \SP_t|\nu-\ell\,dy|(x) + |E(t,x)| \longrightarrow0,
\]
when $|x-x_0|< \al t\to 0$.

\

We now prove Lemma \ref{L_la}. We remark that this is not completely straightforward since for the Hermite operator we do not have 
$\SP_t(\bbone)=\bbone$ (since $\la=0$ is not an eigenvalue of $L$).
We shall use instead an argument, borrowed from \cite[\S3.2]{FGSV} (see also \cite[\S6]{GHSV}), which can be adapted to more general 
positive self-adjoint operators $L$. 
The only tool is the existence of a \emph{regular positive eigenvector} of $L$, that is $\psi\in\Dom(L)$ such that
\Benu
\item[(a)] $\psi\in C^\infty(\SRd)$
\item[(b)] $\psi(x)>0$, $\forall\,x\in\SR^d$
\item[(c)] $L(\psi)=\la\psi$, for some $\la\geq0$.
\Eenu
When $L=-\Dt+|x|^2$, it is elementary to check that
\Be
\label{psi}
\psi(x)=e^{-|x|^2/2}
\Ee
satisfies these properties with $\la=d$.

\Proofof{Lemma \ref{L_la}}
Let $\psi(x)$ be as in \eqref{psi}. Since $\SP_t=e^{-t\sqrt{L}}$, we also have
\[
\SP_t\psi=e^{-t\sqrt\la}\psi.
\]
Then
\Be\label{Pt_aux}
\SP_t\nu(x)-\ell\,=\, \big(\SP_t\nu(x)-\ell\,e^{-t\sqrt{\la}}\big)\,+\,\big(e^{-t\sqrt{\la}}-1\big)\,\ell. 
\Ee
The last summand goes to 0 as $t\to0$, so we look at the first summand.
Since $\psi(x_0)>0$, we may as well consider
\[
A(t,x):=\psi(x_0)\,\big(\SP_t\nu(x)-\ell\,e^{-t\sqrt{\la}}\big), 
\]
which using that $\SP_t\psi=e^{-t\sqrt{\la}}\psi$ we can write as
\Beas
A(t,x) 
& = & \big[\psi(x_0)\,\SP_t\nu(x)-\ell\,\SP_t\psi(x)\big]\,+\,\ell\,\big[\SP_t\psi(x)-\SP_t\psi(x_0)\big]\\
& = & A_1(t,x)+A_2(t,x).
\Eeas
Now,
\[
|A_2(t,x)|=|\ell|\,e^{-t\sqrt{\la}}|\psi(x)-\psi(x_0)| \longrightarrow0 ,\quad \mbox{if $|x-x_0|\to0$},
\]
 by the continuity of $\psi$. Finally, 
\Beas
A_1(t,x) & = & \psi(x_0)\,\SP_t(\nu-\ell\,dy)(x)\,-\,\ell\,\SP_t\big(\psi-\psi(x_0)\big)(x)\\
  & = & A_{1,1}(t,x)+A_{1,2}(t,x).
\Eeas
Since the function $g=\psi-\psi(x_0)$ is continuous everywhere (and vanishes at $x_0$), using Lemma \ref{L1} and standard results 
on approximations of the identity one can show that 
\[
|A_{1,2}(t,x)| \leq |\ell| \, \SP_t|g|(x)\,\lesssim\,\big(p_t*|g|\big)(x)+t\int|g|\Phi\longrightarrow0, 
\]
as $|x-x_0|+t\to 0$. Thus,
\[
A(t,x)=A_{1,1}(t,x) + O(1).
\]
Dividing this expression by $\psi(x_0)$, and going back to \eqref{Pt_aux}, 
one obtains
\[
\SP_t\nu(x)-\ell\,= \SP_t(\nu-\ell\,dy)(x) + O(1).
\]
This proves Lemma \ref{L_la}, and hence it concludes the proof of Theorem \ref{th1}.

\ProofEnd

\section{Proof of Theorem \ref{th2}}\label{S_3}

It will suffice to consider the case when $\ell=0$; otherwise, one would apply the reasoning to the measure $\mu=\nu-\ell\,dy$ 
together with Lemma \ref{L_la}. 
We fix $\al>0$ and must show that
\Be
\label{limP}
\lim_{|x-x_0|<\al t\to 0} \SP_t\nu(x) = 0.
\Ee
By the condition $x_0\in\fS_\nu$ (with $\ell=0$), for any fixed $\e>0$ there exists $\dt\in(0,1)$ such that
\Be
\big|\nu\big(B_r(x)\big)\big|\leq \e\,(|x-x_0|+r)^d, \quad \mbox{whenever}\;\;|x-x_0|+r< 2\dt.
\label{E}
\Ee
If the dimension $d\geq4$, we also assume that $\dt$ is chosen such that
\Be
|\nu|\big(B_r(x_0)\big)\,<\,\e\; r^{d-3}, \quad \mbox{when}\;\;r<2\dt,
\label{E4}
\Ee
in view of the hypothesis in \eqref{d3}. 

The first part of the proof is similar to the proof of Theorem \ref{th1}: 
we consider only points $(t,x)$ such that $|x-x_0|\leq\al t\leq 1$; in particular $|x|\leq |x_0|+1$. 
By Lemma \ref{L1} we have
\[
\SP_t(x,y)\lesssim C_{x_0}\,\Big[p_t(x-y)\bbone_{|y|\leq \gamma(|x_0|+2)}\,+\,t\,\Phi(y)\Big],\quad \forall\,y\in\SR^d,
\]
for some constant $C_{x_0}>0$. 
In the region $|x-y|\geq\dt$, 
we have $p_t(x-y)\lesssim t/\dt^{d+1}$, so the above bound gives
\Bea
\int_{|x-y|\geq\dt}\SP_t(x,y)\,d\,|\nu|(y) & \lesssim & C'_{x_0}\,\big(\dt^{-d-1}+\int\Phi\,d|\nu|\big)\cdot t\nonumber\\
&  = &  c(x_0,\dt)\cdot t\,<\,\e,\label{e0}
\Eea
provided we assume $t<t_0:=\min\{\e/c(x_0,\dt), \dt\}$. So, in the remainder of the proof we shall aim to show 
\[
\Big|\int_{|x-y|< \dt}\SP_t(x,y)\,d\,\nu(y)\Big|=\cO(\e),
\]
when $|x-x_0|\leq \al t$ and $t$ is sufficiently small. Since $|x|\leq |x_0|+1$ we have 
\[
B_\dt(x)\subset\{y\in\SR^d\mid |y|\leq |x_0|+2\}=:K,
\]
so in the sequel we may assume $\nu$ to be supported in the compact set $K$.

We next remove another inessential term: we define
the ``local'' part of the Poisson kernel by
\Be
\label{Pt0}
\SP_t^{0}(x,y):=\,c_d\,t\,\int_0^{1/2}\frac{e^{-\frac{t^2}{\ell(s)}}\,(1-s^2)^{\frac d2-1}\,e^{-\frac14(\frac{|x-y|^2}s+s|x+y|^2)}}
{s^{\frac d2}\,\big(\ell(s)\big)^{3/2}}\,ds.
\Ee
Then we have
\[
\SP_t^{1}(x,y):=\SP_t(x,y)-\SP_t^{0}(x,y)\,\leq\, C\, t\,\int_{1/2}^1(1-s^2)^{\frac d2-1}ds=C'\,t,
\]
for some $C'>0$ (depending only on $d$), and therefore
\Be
\label{P1s}
\int_{K}\SP_t^{1}(x,y)\,d|\nu|(y)\,\leq\,C'\,|\nu|(K)\cdot t= C''\, t<\e,
\Ee
if $t<t_1=\min\{t_0, \e/C''\}$. So it suffices to show that
\[
A(t,x):=\int_{|x-y|< \dt}\SP^0_t(x,y)\,d\,\nu(y) =\cO(\e)
\]
when $|x-x_0|\leq \al t$ and $t$ is sufficiently small. We fix $x$ in what follows, and changing variables $y=x+h$, we rewrite the above expression as
\[
A(t,x)=\int_{|h|< \dt}\SP^0_{t,x}(h)\,d\,\nu_x(h),
\]
where for simplicity we denote
\[
\SP^0_{t,x}(h):=\SP_t^0(x, x+h)\mand \nu_x(E):=\nu(x+E), \;\;E\subset \SR^d.
\]
The kernel now takes the form
\[
\SP^0_{t,x}(h)=\,c_d\,t\,\int_0^{1/2}\frac{e^{-\frac{t^2}{\ell(s)}}\,(1-s^2)^{\frac d2-1}}
{s^{\frac d2}\,\big(\ell(s)\big)^{3/2}}\,e^{-\frac{|h|^2}{4s}}\,e^{-\frac{s|2x+h|^2}4}\,ds.
\]
This is not a radial function in $h$, so we shall split it into a radial part and a remainder.
To do so, we write
\Beas
e^{-\frac{s|2x+h|^2}4} & = & e^{-s|x|^2}\,e^{-\frac{s|h|^2}4}\, e^{-s\,x\cdot h}\\
& = & e^{-s|x|^2}\,e^{-\frac{s|h|^2}4}\,\big(1+R(s\,x\cdot h)\big),
\Eeas
where letting $u=s\,x\cdot h$, we have
\Be
|R(u)|=|e^{-u}-1|=\Big|\int_0^u e^{-v}\,dv\Big|\leq |u|\,e^{|u|}\leq c_{x_0}\,s\,|x|\,|h|,
\label{Ru}
\Ee
since $s\in(0,1/2)$, $|h|\leq 1$ and $|x|\leq |x_0|+1$. We now split
\Be
\label{split}
\SP^0_{t,x}(h) = \cK_{t,x}(h) +\cR_{t,x}(h),
\Ee
with the ``radial part'' of $\SP^0_{t,x}$ (in the variable $h$) given by
\Be
\label{Ktx}
\cK_{t,x}(h)=\,c_d\,t\,\int_0^{1/2}\frac{e^{-\frac{t^2}{\ell(s)}}\,(1-s^2)^{\frac d2-1}}
{s^{\frac d2}\,\big(\ell(s)\big)^{3/2}}\,e^{-\frac{|h|^2}4(s+\frac1s)}\,e^{-s|x|^2}\,ds.
\Ee
The kernel $\cR_{t,x}(h)$ has a similar expression with an additional (non-radial) factor $R(s\,x\cdot h)$ inside the integral.
Disregarding inessential terms, we can estimate it by
\Bea
\cR_{t,x}(h) & \lesssim & t\,\int_0^{1/2} \frac{e^{-\frac{c\,t^2}{4s}}\,e^{-\frac{|h|^2}{4s}}\,s\,|x|\,|h|} 
{s^{\frac {d+3}2}}\,ds
\label{aux_R1}\\
& = & \frac{t\,|x|\,|h|}{(ct^2+|h|^2)^{\frac{d-1}2}}\,\int_{\frac{ct^2+|h|^2}2}^\infty e^{-v}\,v^{\frac{d-1}2}\,\tfrac{dv}v,\nonumber
\Eea
where in the last step we have changed variables $v=(ct^2+|h|^2)/(4s)$.
At this point we distinguish two cases, if $d>1$, the above integral is bounded by a finite constant, so we have
\Be
\label{A1n}
\big|\cR_{t, x}(h)\big|\lesssim\, \frac{t\,|x|\,|h|}{(ct^2+|h|^2)^\frac{d-1}2}.
\Ee
If $d=1$, using that $t,|h|\ll 1$ we have 
\Be
\label{A11}
\big|\cR_{t, x}(h)\big|\lesssim\, t\,|x|\,|h|\, \log\tfrac1{ct^2+|h|^2}
.
\Ee
Note that in both cases the involved constants depend only on $x_0$ (and the dimension $d$), but not on $x$ or $\dt$.
Note further that 
\[
|x|\leq |x-x_0|+|x_0|\leq \al t+|x_0|\leq 1+|x_0|,
\]
so below we shall absorb the factor $|x|$ in \eqref{A1n} and \eqref{A11} into the constants;
however, in the special case that $x_0=0$ this factor is $\cO(t)$ and will lead to a slightly
better result; see Remark \ref{R_x0} below.

We will now show that
\Be
\label{A1to0}
A_1(t,x):=\int_{|h|< \dt}\cR_{t,x}(h)\,d\,\nu_x(h) =\cO(1), \quad \mbox{as $|x-x_0|< \al t\searrow0$.}
\Ee
The argument is different depending on the dimension.

\sline {\bf Case $d=1$}. In this case we simply have
\Beas
|A_1(t,x)| & \leq & \int_{|h|<\dt}|\cR_{t,x}(h)|\;d|\nu_x|(h)\\
& \lesssim & t\,\log(1/t)\,\int_{|h|<\dt}|h|\;d|\nu_x|(h)\\
& \leq & t\,\log(1/t)\;|\nu|(K),
\Eeas
which vanishes as $t\searrow0$.

For higher dimensions $d>1$, the bound in \eqref{A1n} gives
\Bea
|A_1(t,x)|  & \leq &   \int_{|h|<\dt}|\cR_{t,x}(h)|\;d|\nu_x|(h)\nonumber \\
&  \lesssim &  t\,\int_{|h|<\dt}\frac{|h|}{(t+|h|)^{d-1}}\;d|\nu_x|(h)\nonumber
\\ &  \leq &  t\,\int_{|h|<\dt}\frac{d|\nu_x|(h)}{(t+|h|)^{d-2}}.
\label{auxA1}
\Eea
We again distinguish cases.

\sline {\bf Case $d=2$}. In this case,  \eqref{auxA1} clearly becomes $\cO(t)$ as $t\searrow0$ (with a bound independent of $x$ since 
$|\nu|(K)<\infty$).

\sline {\bf Case $d=3$}. 
We now have   
\Beas
|A_1(t,x)| & \lesssim & \int_{|h|<\dt}\frac{t}{t+|h|}\;d|\nu_x|(h)\,=\,\int_{B_\dt(x)}\frac{t}{t+|y-x|}\;d|\nu|(y)\\
\mbox{{\tiny (by Lemma \ref{L2})}} & \lesssim & \int_{K}\frac{t}{t+|y-x_0|}\;d|\nu|(y)
\Eeas
and the right hand side vanishes as $t\searrow0$ by the dominated convergence theorem (independently of $x$).

\sline {\bf Case $d\geq4$}. In this case, rather than \eqref{A1to0}, we show that
\[
\limsup_{|x-x_0|\leq \al t\to0^+}|A_1(t,x)|\leq C\, \e.
\]
To do so, we shall need the additional hypothesis in \eqref{d3}, in the form \eqref{E4}.
 We break the integral in \eqref{auxA1} into pieces.
Assuming $t\ll\dt/2$, we can find $J\in\SN$ such that $2^{J}t<\dt\leq 2^{J+1}t$.
We then write
\[
\int_{|h|<\dt}\frac{t\;d|\nu_x|(h)}{(t+|h|)^{d-2}}\,dh\leq \int_{|h|<t}\cdots\;+\;\sum_{j=0}^{J}\int_{2^{j}t\leq|h|<2^{j+1}t}\cdots=:I_0+I_1.
\]
Then, using \eqref{E4} one finds that
\[
I_0\leq \frac1{t^{d-3}}\,\int_{|h|<t}\,d|\nu_x| = \frac{|\nu|\big(B_t(x)\big)}{t^{d-3}}<\e,
\]
and 
\[
I_1 \leq  \sum_{j=0}^{J}\int_{|h|<2^{j+1}t}\frac{t\,\;d|\nu_x|}{(2^jt)^{d-2}}\,
= \sum_{j=0}^{J} \frac{2^{-j}}{(2^jt)^{d-3}}|\nu|\big(B_{2^{j+1}t}(x)\big)
\leq C\,\e,
\]
with $C=2^{d-2}$. Combining these estimates with \eqref{auxA1} we obtain
\Be
\label{limA1}
\limsup_{|x-x_0|\leq \al t\to0}|A_1(t,x)|\,\leq\, c_{x_0}\,\e.
\Ee
This finishes the estimate involving the remainder part $\cR_{t,x}(h)$ of the kernel $\SP^0_{t,x}(h)$; see \eqref{split}.
We remark that the hypothesis \eqref{nt2} has not been used in this part. 

\

We now turn to estimate 
the piece involving the radially decreasing part $\cK_{t,x}(h)$, which makes a crucial use of this hypothesis (in the form \eqref{E}). 
We shall need a lemma from measure theory, whose proof is similar to \cite[Proposition 6.23]{Fol}, but that we sketch for completeness.
This gives a formula of polar coordinates \eqref{intmu} for a general measure $\mu$.

\begin{lemma}\label{Lm}
Given a (locally finite) complex measure $\mu$ in $\SR^d$, let 
\Be
\vrho(r):=\mu\big(\bB_r(0)\big)=\mu\Big(\big\{|x|\leq r\big\}\Big), \quad r\geq0,
\label{vrho}
\Ee
and denote by $m_\vrho$ the associated Lebesgue-Stieltjes measure in $[0,\infty)$, that is
\Be
m_\vrho\big((a,b]\big)=\vrho(b)-\vrho(a), \mand m_\rho\big(\{0\}\big)=\vrho(0).
\label{mvrho}
\Ee
Then, for every  radial $f(x)=f_0(|x|)\in L^1(\SR^d;|\mu|)$, it holds
\Be
\int_{\SR^d}f(x)\,d\mu(x)=\int_{[0,\infty)}f_0(r)\,dm_\vrho(r).
\label{intmu}
\Ee
\end{lemma}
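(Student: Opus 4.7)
The plan is to identify the Lebesgue--Stieltjes measure $m_\vrho$ with the pushforward of $\mu$ under the radial projection $\pi\colon\SR^d\to[0,\infty)$, $\pi(x)=|x|$. Once this identification is in place, \eqref{intmu} reduces to the standard change-of-variables formula $\int f_0\circ\pi\,d\mu=\int f_0\,d(\pi_*\mu)$ for pushforward measures, and the rest is routine bookkeeping.

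First I would reduce to a positive measure. Writing $\mu=\mu_1+i\mu_2$ with $\mu_j$ real signed measures and applying the Jordan decomposition to each, one expresses $\mu$ as a linear combination (with coefficients $\pm 1$, $\pm i$) of four positive locally finite measures, each dominated by $|\mu|$. The hypothesis $f\in L^1(\SR^d;|\mu|)$ therefore guarantees $f$ is integrable against every summand, and both sides of \eqref{intmu} are linear in $\mu$, so it suffices to treat a single positive locally finite measure. For such a $\mu$, the function $\vrho$ is nondecreasing, and it is right-continuous because $\bB_r=\bigcap_{r'>r}\bB_{r'}$ and $\mu$ is continuous from above on sets of finite measure (using local finiteness). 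Hence $m_\vrho$ is a well-defined locally finite Borel measure on $[0,\infty)$ characterized by \eqref{mvrho}.

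On the other hand, the pushforward $\pi_*\mu$ on $[0,\infty)$ satisfies
\[
(\pi_*\mu)\big((a,b]\big)=\mu\big(\{x\in\SR^d\mid a<|x|\leq b\}\big)=\vrho(b)-\vrho(a),
\]
together with $(\pi_*\mu)(\{0\})=\mu(\{0\})=\vrho(0)$. Since the family $\{\{0\}\}\cup\{(a,b]\mid 0\leq a<b\}$ is a $\pi$-system generating the Borel $\sigma$-algebra of $[0,\infty)$, and both measures are locally finite, a standard uniqueness argument yields $m_\vrho=\pi_*\mu$. The identity \eqref{intmu} then follows from the pushforward change-of-variables formula: it is immediate for indicators of Borel sets, extends to simple functions by linearity and to nonnegative Borel $f_0$ by monotone convergence, and the general integrable case is obtained by splitting $f_0$ into real/imaginary and positive/negative parts. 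The only real technical step is the identification $m_\vrho=\pi_*\mu$; no serious obstacle is expected, since the argument closely parallels the classical polar decomposition for Lebesgue measure (cf.\ \cite[Proposition 6.23]{Fol}) once the Jordan reduction has been carried out.
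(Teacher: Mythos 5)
Your proof is correct and follows essentially the same route as the paper: the measure you call $\pi_*\mu$ is exactly the paper's $\mu_0$, and both arguments identify it with $m_\vrho$ via the same uniqueness-on-a-generating-$\pi$-system step before extending from indicators through simple functions and monotone convergence. You are somewhat more explicit than the paper about the reduction to positive measures (Jordan decomposition) and the right-continuity of $\vrho$, but these are details the paper implicitly assumes rather than a genuinely different argument.
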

\begin{proof}
We may assume that the measure $\mu$ is positive. Consider the new 
measure $\mu_0$ defined on Borel sets $A\subset[0,\infty)$ by
\[
\mu_0(A):=\mu\Big(\big\{x\mid |x|\in A\big\}\Big)=\int_{\SR^d}\bbone_A(|x|)d\mu(x).
\]
When $A$ is an interval $(a,b]\subset(0,\infty)$ (or $A=\{0\}$), it is clear by definition that
\[
\mu_0(A)=m_\vrho(A).
\]
Since these elementary sets generate the $\sigma$-algebra of all Borel sets in $[0,\infty)$, 
by uniqueness, the two measures $\mu_0$ and $m_\vrho$ will coincide over all such sets; see e.g. \cite[Theorem 1.14]{Fol}.
Thus, for every function of the form $f(x)=\bbone_A(|x|)$ with $A$ a Borel set, it will hold
\[
\int_{\SR^d}f(x)\,d\mu(x)=\int_{[0,\infty)}f_0(r)\,d\mu_0(r)=\int_{[0,\infty)}f_0(r)\,dm_\vrho(r).
\]
By linearity, this identity extends to all simple functions, and by monotone convergence to all non-negative
$f(x)=f_0(|x|)$. A further extension by linearity to complex functions in $L^1(d|\mu|)$ establishes the result.
\end{proof}

We shall apply the previous lemma to the measure $\mu=\nu_x$, so that
\[
A_2(t,x):= \int_{|h|< \dt}\cK_{t,x}(h)\,d\,\nu_x(h)=\int_{[0,\dt)}\cK_{t,x}^0(r)\,dm_{\vrho_x}(r),
\]
with $\cK_{t,x}(h)=\cK_{t,x}^0(|h|)$ and
\Be
\vrho_x(r):=\nu_x\big(\bB_r(0)\big)=\nu\big(\bB_r(x)\big).
\label{vrhox}
\Ee
Notice that condition \eqref{E} implies
\Be\label{Erho}
\big|\vrho_x(r)\big|\leq \e\,(|x-x_0|+r)^d, \quad \mbox{if}\;\;|x-x_0|+r<2\dt.
\Ee
Then, integrating by parts (see e.g. \cite[Theorem 3.36]{Fol}) we have
\Bea
A_2(t,x) & = & \cK^0_{t,x}(0)\vrho_x(0)+\int_{(0,\dt)}\cK^0_{t,x}(r)\,dm_{\vrho_x}(r)\nonumber\\
& = & \cK^0_{t,x}(0){\vrho_x}(0)+ \Big[\cK^0_{t,x}(r){\vrho_x}(r)\Big]_{r=0}^{r=\dt^-}-\int_0^\dt{\vrho_x}(r)\,\frac{d\cK^0_{t,x}}{dr}(r)\,dr\nonumber\\
& = & \cK^0_{t,x}(\dt){\vrho_x}(\dt^-)+\int_0^\dt{\vrho_x}(r)\,\Big|\frac{d\cK^0_{t,x}}{dr}(r)\Big|\,dr,\label{bdry}
\Eea
using in the last line that (for each fixed $t$ and $x$)
the function 
\[
r\longmapsto \cK^0_{t,x}(r)
\]
is decreasing in $[0,\infty)$; see \eqref{Ktx}. Note also that
\Be
\label{Kro}
\cK_{t,x}(h)\lesssim \frac{t}{(t+|h|)^{d+1}},
\Ee
which can be proved with a similar argument as we did in \eqref{aux_R1} for the kernel $\cR_{t,x}(h)$ (removing the factor $s\,|x|\,|h|$
that appears in that kernel due to \eqref{Ru}). So, using this and \eqref{Erho}, the boundary term in \eqref{bdry} satisfies
\[
\cK^0_{t,x}(\dt)\,\big|\vrho_x(\dt^-)\big|=\cK^0_{t,x}(\dt)\,\big|\nu\big(B_\dt(x)\big)\big|
\lesssim \frac{t}{\dt^{d+1}}\,(|x-x_0|+\dt)^d\cdot\e\,\lesssim \,\e,
\]
since $|x-x_0|\leq \al t < \dt$ (and also $t< \dt$). To estimate the integral in \eqref{bdry} we split it as $\int_0^t + \int_t^\dt$.
In the first range we use that
\[
\big|\vrho_x(r)\big|\leq\,(|x-x_0|+r)^d\,\e\lesssim \e\,t^d,
\]
since $|x-x_0|\leq \al t$ and $r\leq t$. So, 
\Beas 
\int_0^t|\vrho_x(r)|\,\Big|\frac{d\cK^0_{t,x}}{dr}(r)\Big|\,dr & \lesssim & \e\, t^d\, \int_0^t\Big|\frac{d\cK^0_{t,x}}{dr}(r)\Big|\,dr\\
& = & \e\, t^d\, \Big[\cK^0_{t,x}(0)-\cK^0_{t,x}(t)\Big]\\
& \leq & \e\, t^d\,\cK^0_{t,x}(0)\,\lesssim\,\e,
\Eeas
with the last bound due to \eqref{Kro}. Finally, if $r\in(t,\dt)$ we have
\[
\big|\vrho_x(r)\big|\leq\,(|x-x_0|+r)^d\,\e\leq (\al t+r)^d\,\e\,\lesssim \e\,r^d.
\]
Hence
\Beas 
\int_t^\dt|\vrho_x(r)|\,\Big|\frac{d\cK^0_{t,x}}{dr}(r)\Big|\,dr & \lesssim & \e\, \int_0^\dt\Big|\frac{d\cK^0_{t,x}}{dr}(r)\Big|\,r^d\,dr\\
\mbox{{\tiny (parts)}} & = & \e\, \Big(-\Big[r^d\,\cK^0_{t,x}(r)\Big]_0^\dt+d\int_0^\dt \cK_{t,x}(r)\,r^{d-1}\,dr\Big)\\
\mbox{{\tiny (by \eqref{Kro})}} & \lesssim & \e\, \int_0^\infty\frac {t\,r^{d-1}\,dr}{(t+r)^{d+1}} \,=\,c\,\e.
\Eeas
This shows that, if $|x-x_0|\leq \al t$ and $t<t_2=\min\{\dt, \dt/\al\}$ then
\[
|A_2(t,x)|\leq C\,\e.
\]
Combining this with the previous estimates \eqref{e0}, \eqref{P1s} and \eqref{limA1}, we conclude that, given $\e>0$ there exists 
$\tau_0=\tau_0(\e)>0$ such that, if $|x-x_0|<\al t$ and $t\in(0,\tau_0)$ then 
\[
\big|\SP_t\nu(x)\big|<\e.
\]
Thus, \eqref{limP} holds and we have completed the proof of Theorem \ref{th2}.
\ProofEnd

\BR
\label{R_x0}
As observed above, in the special case that $x_0=0$, the estimate of the remainder term $\cR_{t,x}$ can be slightly improved, due to
the factor $|x|=\cO(t)$ appearing in \eqref{A1n}. Indeed, this will give a better factor $t^2$ in the estimate \eqref{auxA1},
which in turn implies that
\[
A_1(t,x)
 =\cO(1), \quad \mbox{as $t\searrow0$}
\]
for all dimensions $d\leq 4$. 
Moreover, in order to have, for dimensions $d\geq5$, the estimate 
\[
\limsup_{|x-x_0|<\al t\to0}|A_1(t,x)|\leq C\, \e,
\]
one only needs the assumption 
\Be
\lim_{r\to0}\frac{|\nu-\ell\, dy|(B_r(0))}{r^{d-4}}=0, 
\label{d4}
\Ee
which at the point $x_0=0$ is weaker than \eqref{d3}.
So, overall, we can formulate this special case as a separate theorem.

\begin{theorem}\label{th3}
Let $\nu\in\cM(\Phi)$ and assume that $0$ is a $\sigma$-point of $\nu$ with value $\ell$. 

\sline (i) If $d\in\{1,2,3, 4\}$ then
\Be
\lim_{|x|<\al t\to 0} \SP_t\nu(x) = \ell 
, \quad \forall\,\al>0.
\label{nt20}
\Ee
\sline (ii) If $d\geq5$ then \eqref{nt20} holds under the additional assumption \eqref{d4}.

\end{theorem}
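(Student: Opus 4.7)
The plan is to follow the proof of Theorem \ref{th2} with $x_0 = 0$, exploiting the fact that the non-tangential constraint $|x| < \al t$ then forces $|x| = \mathcal{O}(t)$. This will inject an additional factor of $t$ into the estimate of the remainder kernel, pushing the critical dimension from $d = 3$ up to $d = 4$. First I would reduce to $\ell = 0$ via Lemma \ref{L_la}. The preliminary reductions from \S\ref{S_3}---the tail $|x-y| \geq \dt$ and the non-local piece $\SP_t^1$ each absorb into $\mathcal{O}(t)$---apply verbatim, so it suffices to control $\int_{|h|<\dt} \SP_{t,x}^0(h)\,d\nu_x(h)$, which I would split as in \eqref{split} into the radial part $\cK_{t,x}$ and the remainder $\cR_{t,x}$.

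The radial contribution $A_2(t, x)$ is insensitive to where $x_0$ lies: the integration-by-parts argument built on the $\sigma$-point bound \eqref{Erho} and the pointwise kernel estimate \eqref{Kro} continues to give $|A_2(t,x)| \lesssim \e$. All the novelty lies in the remainder $A_1(t, x) = \int \cR_{t,x}(h)\,d\nu_x(h)$. Since $|x| \leq \al t$ at $x_0 = 0$, the factor $|x|$ in \eqref{A1n} becomes $\mathcal{O}(t)$, upgrading the estimate to
\[
|\cR_{t,x}(h)| \,\lesssim\, \frac{t^2\,|h|}{(ct^2+|h|^2)^{(d-1)/2}},
\]
and hence $|A_1(t, x)| \lesssim t^2 \int_{|h|<\dt} d|\nu_x|(h)/(t+|h|)^{d-2}$ when $d > 1$.

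For $d \in \{2, 3, 4\}$, I would pull back to the original variable via Lemma \ref{L2}, observe that the integrand is uniformly bounded in $t$ and converges pointwise to $0$ away from the origin, and appeal to dominated convergence; the fact that $0 \in \fS_\nu$ with $\ell = 0$ forces $|\nu|(\{0\}) = 0$, so the limit is $0$ uniformly in $x$ with $|x| < \al t$. The case $d = 1$ is analogous and in fact strengthens the Theorem \ref{th2} estimate by one power of $t$. For $d \geq 5$ the integrand is no longer uniformly integrable as $t \to 0$, so I would invoke the hypothesis \eqref{d4} and reproduce the dyadic decomposition used for $d \geq 4$ in Theorem \ref{th2}: splitting into $|h| < t$ and the shells $2^j t \leq |h| < 2^{j+1} t$ for $j = 0, \ldots, J$ with $2^J t \approx \dt$, and bounding each piece using $|\nu|(B_r(x)) \leq |\nu|(B_{r+\al t}(0)) \lesssim \e(r+\al t)^{d-4}$. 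The main subtlety I anticipate is verifying that, after the extra $t^2$ in front combines with the exponents supplied by \eqref{d4}, the resulting dyadic sum telescopes into a geometric series of total mass $\mathcal{O}(\e)$---this is the precise numerical counterpart of the one-power gap between \eqref{d3} and \eqref{d4}.
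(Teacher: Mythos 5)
Your proposal is correct and follows exactly the route the paper indicates in Remark~\ref{R_x0}: the factor $|x|=\cO(t)$ upgrades \eqref{A1n} by one power of $t$, leaving the radial part $A_2$ unchanged and moving the critical dimension of the remainder $A_1$ from $3$ to $4$, with the dyadic argument under \eqref{d4} taking over for $d\geq5$. Your dominated-convergence treatment of $d=4$ (including the observation that $0\in\fS_\nu$ forces $|\nu|(\{0\})=0$) and the telescoping check for $d\geq5$ are both correct and match the intended computation.
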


\ER

\section{Declarations}
\begin{enumerate}
\item Conflict of interest: not applicable.
\item Availability of data and materials: not applicable.
\end{enumerate}

\end{document}